\documentclass[11pt]{article}
\usepackage{amsmath}
\usepackage{amssymb}
\usepackage{amsthm}
\usepackage[mathscr]{eucal}
\usepackage{graphics}
\usepackage{endnotes}

\usepackage{epsfig}

\newtheorem{theorem}{Theorem}[section]
\newtheorem{lemma}{Lemma}[section]

\newtheorem{corollary}{Corollary}[section]

\newtheorem{acknowledgement}{Acknowledgement}
\newtheorem{remark}{Remark}

\input{epsf}

\title{Evolution algebras generated by Gibbs measures}

\author{Utkir A. Rozikov\textsuperscript{1},\ \
Jianjun Paul Tian\textsuperscript{2}  \footnote{Corresponding author} \\
{\small \textsuperscript{1} Institute of Mathematics and
Information Technologies,\hfill{\ }}\\
\ \ {\small 29, F. Hodjaev str., Tashkent, 100125, Uzbekistan \hfill
{\
}}\\
{\small \textsuperscript{2} Department of Mathematics, College of William and Mary,\hfill{\ }}\\
\ \ {\small Williamsburg, Virginia, 23187-8795, USA\hfill {\
}}\\
{\small Email: rozikovu@yandex.ru, jptian@math.wm.edu \hfill {\ }}}

\date{}

\begin{document}

\maketitle

\begin{abstract}
In this article we study algebraic structures of function spaces
defined by graphs and state spaces equipped with Gibbs measures by
associating evolution algebras. We give a constructive description
of associating evolution algebras to the function spaces (cell
spaces) defined by graphs and state spaces and Gibbs measure $\mu$.
For finite graphs we find some evolution subalgebras and other
useful properties of the algebras. We obtain a structure theorem for
evolution algebras when graphs are finite and connected. We prove
that for a fixed finite graph, the function spaces has a unique
algebraic structure since all evolution algebras are isomorphic to
each other for whichever Gibbs measures assigned. When graphs are
infinite graphs then our construction allows a natural introduction
of thermodynamics in studying of several systems of biology, physics
and mathematics by theory of evolution algebras.

{\bf{Key words:}} Graphs, cells, Gibbs measures, evolution algebras.
\end{abstract}

\section{Introduction}

The concept of evolution algebras lies between algebras and
dynamical systems. Algebraically, evolution algebras are
non-associative Banach algebra; dynamically, they represent discrete
dynamical systems. Evolution algebras have many connections with
other mathematical fields including graph theory, group theory,
stochastic processes, mathematical physics etc.

In book \cite{tian} the foundation of evolution algebra theory and
applications in non-Mendelian genetics and Markov chains are
developed, with pointers to some further research topics.

Gibbs measures are familiar subjects in various areas of applied
probability. Originally they were defined in the framework of
lattice spin systems to describe thermodynamic equilibrium states
\cite{h-o}\cite{preston}\cite{sinai}.

In the present paper we explore some algebraic structures of
function spaces defined by graphs and state spaces equipped with
Gibbs measures by associating evolution algebras. The study of these
function spaces defined by graphs and state spaces equipped with
Gibbs measures was inspired by Mendelian genetics. For finite graphs
we find some evolution subalgebras and other useful properties of
the algebras. We obtain a structure theorem for evolution algebras
when graphs are finite and connected. We also prove that for a fixed
finite graph, the function spaces (cell spaces) has a unique
algebraic structure since all evolution algebras are isomorphic each
other for whichever Gibbs measures assigned. Note that each
evolution algebra can be uniquely determined by an assignment of its
structural coefficients which can be arranged into a quadratic
matrix. For infinite graphs we must use limit Gibbs measures, then
one of the key problems is to determine the entries (structural
coefficients) of the matrices which are already infinite sizes; the
second problem is to investigate the evolution algebras which
correspond to these matrices. These investigations allows to a
natural introduction of thermodynamics in the study of the evolution
algebras. More precisely, if graph $G$ is infinite (countable) then
one can associate a limit Gibbs measure $\mu$ by a Hamiltonian $H$
(defined on $G$) and temperature $T>0$
\cite{h-o}\cite{preston}\cite{sinai}. It is known that depending on
the Hamiltonian and the values of the temperature the measure $\mu$
can be non unique. In this case there is a phase transition of the
physical system with the Hamiltonian $H$. We ask: how the
thermodynamics (the phase transition) will affect properties of
evolution algebras corresponding to Gibbs measures of the
Hamiltonian $H$, and how evolution algebras reflect thermodynamics.
We will give some comments about this problem.

This paper is organized as follows. In Section 2, we give some
preliminaries about graphs, evolution algebras, and Gibbs measures.
In Section 3, for finite graphs and finite state spaces, evolution
algebras are constructed. When graphs are finite and connected, the
structure theorem is obtained. We also study how Gibbs measures
affect evolution algebras. In Section 4, we present two simple
examples to illustrate the theorems in Section 3. In Section 5, we
constructively define evolution algebras when graphs are countable.
We also give an example to show that the limit evolution algebra for
a given graph and a series of Gibbs measures is not unique. We list
several open problems at the end.

\section{Preliminaries}

\noindent {\bf Graphs:} A graph $G$ is an ordered pair of disjoint
sets $(\Lambda, L)$ such that $L$ is a subset of the set
$\Lambda^{(2)}$ of unordered pairs of $\Lambda$. The set $\Lambda$
is the set of vertices and $L$ is the set of edges. An edge
$\{x,y\}$ is said to join the vertices $x$ and $y$. If $\{x,y\}\in
L$, then $x$ and $y$ are called neighboring vertices of $G$. We say
that $G'=(\Lambda', L')$ is a subgraph of $G=(\Lambda, L)$ if
$\Lambda'\subset \Lambda$ and $L'\subset L$. A graph is connected if
for every pair $x, y$ of distinct vertices there is a path from $x$
to $y$. A maximal connected subgraph is a component of the graph.

\noindent {\bf Evolution algebras:} Let $(A,\cdot)$ be an algebra
over a field $K$. If it admits a basis $e_1,e_2,...$, such that
\[e_i\cdot e_j=0, \ \ \mbox{if}\ \ i\ne j,\]
\[e_i\cdot e_i=\sum_{k}a_{ik}e_k, \ \ \mbox{for any}\ \ i,\] we then
call this algebra an evolution algebra.

Evolution algebras have following elementary properties (see
\cite{tian})

--Evolution algebras are not associative, in general.

--Evolution algebras are commutative, flexible.

--Evolution algebras are not power-associative, in general.

--Direct sums of evolution algebras are also evolution algebras.

--Kronecker products of evolutions algebras are also evolution
algebras.

\noindent {\bf Function spaces (cell spaces) and Hamiltonian:}  Let
$G=(\Lambda, L)$ be a graph, and $\Phi$ be a state space with finite
many states. For any $A\subseteq \Lambda$, a cell $\sigma_{A}$ on
$A$ is defined as a function
 $x\in A\to\sigma_{A}(x)\in\Phi=\{1,2,...,q\}$; the set of all cells
 coincides with $\Omega_A=\Phi^{A}$. We denote $\Omega=\Omega_\Lambda$ and
$\sigma=\sigma_{\Lambda}.$

The energy of the cell $\sigma\in \Omega$ is given by the formal
Hamiltonian
\[H(\sigma)=\sum\limits_{A\subset \Lambda}I(\sigma_{A}) \eqno (1)\]
where $I(\sigma_{A}): \Omega_A\to R$ is a given potential function.

For a finite domain $D\subset \Lambda$ with the boundary condition
$\varphi_{D^c}$ given on its complement $D^c=\Lambda\setminus D,$
the conditional Hamiltonian is \[H(\sigma_{D}\big |
\varphi_{D^c})=\sum_{A\subset \Lambda: A\cap D\ne
\emptyset}I(\sigma_{A}), \eqno(2)\] where
\[\sigma_{A}(x)=\left\{\begin{array}{ll}
\ \sigma(x) & \textrm{if \ \
$ x\in A\cap D$}\\
\varphi(x)& \textrm{if \ \ $x\in A\cap D^c.$}\\
\end{array}\right.\]

\noindent {\bf Gibbs measures:} We consider a standard sigma-algebra
${\cal B}$ of subsets of $\Omega$ generated by cylinder subsets; all
probability measures are considered on $(\Omega,{\cal B})$. A
probability measure $\mu$ is called a Gibbs measure (with
Hamiltonian $H$) if it satisfies the DLR equation: for all finite
$D\subset \Lambda$ and $\sigma_D\in\Omega_D$:
\[\mu\left(\left\{\omega\in\Omega:\;
\omega\big|_{D}=\sigma_D\right\}\right)= \int_{\Omega}\mu ({\rm
d}\varphi)\nu^{D}_\varphi (\sigma_D),\eqno (3)\] where
$\nu^{D}_{\varphi}$ is the conditional probability: \[
\nu^{D}_{\varphi}(\sigma_D)=\frac{1}{Z_{D,\varphi}}\exp\;\left(-\beta
H \left(\sigma_D\big |\,\varphi_{D^c}\right)\right). \eqno (4)\]
Here $\beta={1\over T}, T>0$ temperature and $Z_{D, \varphi}$ stands
for the partition function in $D$, with the boundary condition
$\varphi$: \[Z_{D, \varphi}=
\sum_{{\widetilde\sigma}_D\in\Omega_{D}} \exp\;\left(-\beta H
\left({\widetilde\sigma}_D\,\big |\,\varphi_{D^c}\right)\right).\]

\section{Construction of Evolution algebras for finite graphs}

Let $G=(\Lambda, L)$ be a finite graph without loops and multiple
edges. Furthermore, let $\Phi$ be a finite set. Denote by $\Omega$
the set of all cells $\sigma:\Lambda\to\Phi$.

One interpretation of the set $\Omega^{2}=\Omega\times \Omega $
could be the set of all pairs of "parents".

Let $\{\Lambda_{i}, i=1,...,m\}$ be the set of all maximal connected
subgraphs (components) of the graph $G$.

For any $M\subset \Lambda$ and $\sigma\in \Omega$ denote
$\sigma(M)=\{\sigma(x): x\in M\}$. We say $\sigma(M)$ is a subcell
iff $M$ is a maximal connected subgraph of $G$.

Fix two cells $\varphi, \psi \in \Omega,$ (i.e. fix a pair of
parents $\theta=(\varphi, \psi)$) and set
\[\Omega_{\theta}=\{\omega\in \Omega: \omega(\Lambda_{i})=\varphi(\Lambda_{i}) \ \
{\rm or} \ \ \omega(\Lambda_{i})=\psi(\Lambda_{i}) \ \ {\rm for \ \
all} \ \ i=1,...,m\}.\eqno(5)\]

\begin{remark}
The set $\Omega_{\theta}$ can be interpreted as the set of all
possible children of the pair of parents $\theta$. A child $\omega$
can be born from $\theta$ if it only consists the subcells of its
parents $\theta$. Such a set was first considered in \cite{n-n} and
in the general form in \cite{rozikov}.
\end{remark}

Denote $\Omega^{2}_{\theta}=\{\psi=(\psi_1,\psi_2): \psi_1,\psi_2\in
\Omega_{\theta}\}$. This set can be interpreted as a set of all
possible pairs of parents which can be generated from the parents
$\theta\in \Omega{^2}$.

Let $S(\Omega)$ be the set of all probability measures defined on
the finite set $\Omega$. Now let $\mu\in S(\Omega)$ be a probability
measure defined on $\Omega$ such that $\mu(\sigma)>0$ for any cell
$\sigma\in \Omega;$ i.e $\mu$ is a Gibbs measure with some potential
\cite{preston}.

Consider measure $\mu^2=\mu\times\mu$ on $\Omega{^2}$.

The (heredity) coefficients $P_{\varphi\psi}$ on $\Omega{^2}$ are
defined as \[P_{\varphi\psi}=\left\{\begin{array}{ll}
{\mu^2(\psi)\over \mu^2(\Omega{^2}_\varphi)}, \ \ {\rm if} \ \
\psi\in
\Omega{^2}_\varphi,\\
0 \ \ {\rm otherwise}.\\
\end{array}\right.\eqno (6)\]
Obviously, $P_{\varphi\psi}\geq 0$  and $\sum_{\psi\in \Omega{^2}}
P_{\varphi\psi}=1$ for all $\varphi\in \Omega{^2}.$

Now we can define an evolution algebra
$\mathcal{E}=\mathcal{E}(G,\Phi,\mu)$ generated by generators from
$\Omega^{2}$ and following defining relations:

\[\left\{\begin{array}{ll} \varphi^2=\sum_{\psi\in
\Omega{^2}}P_{\varphi\psi}\psi,  \ \ {\rm for} \
\ \varphi\in \Omega{^2},\\
\varphi\cdot \psi=0 \ \ {\rm if} \ \ \varphi\ne\psi, \ \ \varphi,\psi\in \Omega{^2}.\\
\end{array}\right.\eqno (7)\]

To study properties of the evolution algebra $\mathcal{E}$ we shall
use properties of the set $\Omega{^2}_{\sigma}$.

\begin{lemma}\label{lema-1}
If $\tau\in \Omega{^2}_{\sigma}$ then $\Omega{^2}_{\tau}\subseteq
\Omega{^2}_{\sigma}$, where $\sigma\in\Omega^{2}.$
\end{lemma}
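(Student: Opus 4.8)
The plan is to reduce the two-coordinate statement to a single-coordinate one and then argue separately on each connected component $\Lambda_i$. By the very definition $\Omega^2_{\theta}=\{(\psi_1,\psi_2):\psi_1,\psi_2\in\Omega_{\theta}\}$, the set $\Omega^2_{\theta}$ is just the Cartesian square $\Omega_{\theta}\times\Omega_{\theta}$. Hence $\Omega^2_{\tau}=\Omega_{\tau}\times\Omega_{\tau}$ and $\Omega^2_{\sigma}=\Omega_{\sigma}\times\Omega_{\sigma}$, and the desired inclusion $\Omega^2_{\tau}\subseteq\Omega^2_{\sigma}$ follows immediately once one proves the single-coordinate inclusion $\Omega_{\tau}\subseteq\Omega_{\sigma}$. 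So the whole lemma collapses to that one containment, and I would state this reduction first to avoid carrying the pair structure through the computation.

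Next I would write $\sigma=(\sigma_1,\sigma_2)$ and $\tau=(\tau_1,\tau_2)$ and unpack the hypothesis. Saying $\tau\in\Omega^2_{\sigma}$ means precisely $\tau_1,\tau_2\in\Omega_{\sigma}$; by $(5)$ this says that on every component $\Lambda_i$ each of $\tau_1$ and $\tau_2$ agrees, as a subcell, with $\sigma_1$ or with $\sigma_2$, i.e. $\tau_j(\Lambda_i)\in\{\sigma_1(\Lambda_i),\sigma_2(\Lambda_i)\}$ for $j=1,2$ and all $i$. Now take an arbitrary $\rho\in\Omega_{\tau}$ and fix a component $\Lambda_i$. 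By definition of $\Omega_{\tau}$ we have $\rho(\Lambda_i)=\tau_1(\Lambda_i)$ or $\rho(\Lambda_i)=\tau_2(\Lambda_i)$; in either case the hypothesis forces $\rho(\Lambda_i)\in\{\sigma_1(\Lambda_i),\sigma_2(\Lambda_i)\}$. Since $i$ was arbitrary, $\rho$ satisfies the defining condition $(5)$ for $\Omega_{\sigma}$, so $\rho\in\Omega_{\sigma}$. This proves $\Omega_{\tau}\subseteq\Omega_{\sigma}$ and hence the lemma.

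There is no genuine analytic difficulty here: the argument is essentially a transitivity observation, carried out component by component, for the pointwise constraint that each subcell be inherited from one of the two parents. The only points demanding care are bookkeeping ones, namely that the square structure of $\Omega^2$ decouples cleanly into two independent coordinates so that the reduction to $\Omega_{\tau}\subseteq\Omega_{\sigma}$ is legitimate, and that the two-way case analysis on each $\Lambda_i$ really exhausts all possibilities no matter which parent $\rho$ and each $\tau_j$ happen to match on that component. I would also remark that the statement is purely combinatorial: it uses only the nested structure of the ``children'' sets $\Omega_{\theta}$ and makes no use whatsoever of the measure $\mu$ or of the heredity coefficients $(6)$, which is why it can later serve as a structural tool for studying $\mathcal{E}$ independently of the choice of Gibbs measure.
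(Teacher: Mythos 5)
Your proposal is correct and is essentially the paper's own argument: both proofs decompose each cell over the maximal connected subgraphs $\Lambda_1,\dots,\Lambda_m$ and observe the transitivity $\{\tau_1(\Lambda_i),\tau_2(\Lambda_i)\}\subseteq\{\sigma_1(\Lambda_i),\sigma_2(\Lambda_i)\}$ on every component. Your preliminary reduction via the factorization $\Omega^2_{\theta}=\Omega_{\theta}\times\Omega_{\theta}$ is only a cosmetic repackaging of the paper's direct verification on pairs $\xi=(\xi^1,\xi^2)\in\Omega^2_{\tau}$, not a different route.
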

\begin{proof}
Let $G$ be a (finite) graph and $\{\Lambda_{i}, i=1,...,m\}$ the set
of all maximal connected subgraphs of $G$. Denote by
$\Phi_{i}=\Phi^{\Lambda_{i}}$ the set of all subcells defined on
$\Lambda_{i}$, $i=1,...,m$. Then any cell $\rho\in\Omega$ has
representation $\rho=(\rho_{1},...,\rho_{m})$, with $\rho_{i}\in
\Phi_{i}$, $i=1,...,m$.

For $\sigma=(\varphi,\psi)\in \Omega^{2}$ with
$\varphi=(\varphi_{1},...,\varphi{_m}), \
\psi=(\psi_{1},...,\psi_{m})\in \Omega$, we have
\[\Omega^{2}_{\sigma}=\left\{\tau=(\nu,\omega)=((\nu_1,...,\nu_m),(\omega_{1},...,\omega_{m}))
: \nu_i, \omega_{i}\in \{\varphi_{i},\psi_{i}\}, i=1,...m\right\}.\]
Take $\tau=(\nu, \omega)\in \Omega^{2}_{\sigma}$ then
$\nu_i,\omega_{i}\in\{\varphi_{i},\psi_{i}\}$ for any $i=1,...,m$.
Thus for arbitrary $\xi=(\xi^1, \xi^2)\in \Omega^{2}_{\tau}$ we have
$\xi^1_i, \xi^2_i\in \{\nu_i,\omega_{i}\}\subseteq
\{\varphi_{i},\psi_{i}\}$ for any $i=1,...,m$, i.e. $\xi\in
\Omega^{2}_{\sigma}$. This completes the proof.
\end{proof}

As a corollary of Lemma \ref{lema-1} we have

\begin{lemma}\label{lema-2}
If $\tau\in \Omega_{\sigma}$ and $\sigma\in \Omega^{2}_{\tau}$, then
$\Omega^{2}_{\sigma} = \Omega^{2}_{\tau}$.
\end{lemma}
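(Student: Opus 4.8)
The plan is to obtain the equality directly from Lemma~\ref{lema-1}, applied twice in opposite directions, and then to invoke the antisymmetry of set inclusion. The key observation is that the two hypotheses $\tau\in\Omega^{2}_{\sigma}$ and $\sigma\in\Omega^{2}_{\tau}$ are completely symmetric under interchange of $\sigma$ and $\tau$, so each hypothesis supplies exactly one of the two inclusions needed for equality.

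Concretely, I would first feed the hypothesis $\tau\in\Omega^{2}_{\sigma}$ into Lemma~\ref{lema-1}, which immediately yields $\Omega^{2}_{\tau}\subseteq\Omega^{2}_{\sigma}$. Then, applying the same lemma with the roles of $\sigma$ and $\tau$ exchanged and using the second hypothesis $\sigma\in\Omega^{2}_{\tau}$, I would obtain the reverse inclusion $\Omega^{2}_{\sigma}\subseteq\Omega^{2}_{\tau}$. Combining the two inclusions gives $\Omega^{2}_{\sigma}=\Omega^{2}_{\tau}$, which completes the argument.

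Since this is a corollary, there is no real obstacle; the only point deserving attention is that Lemma~\ref{lema-1} is stated for an arbitrary base element of $\Omega^{2}$, so it may be invoked with $\tau$ playing the role of the base cell exactly as with $\sigma$. I would also note that, for the two hypotheses to be symmetric (and for Lemma~\ref{lema-1} to apply to the first of them), the condition written as ``$\tau\in\Omega_{\sigma}$'' should be read as $\tau\in\Omega^{2}_{\sigma}$.
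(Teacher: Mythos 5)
Your proof is correct and matches the paper exactly: the paper states Lemma~\ref{lema-2} as an immediate corollary of Lemma~\ref{lema-1} without writing out the argument, and the intended derivation is precisely your two-way application of that lemma plus antisymmetry of inclusion. Your reading of the typo ``$\tau\in\Omega_{\sigma}$'' as $\tau\in\Omega^{2}_{\sigma}$ is also the right correction.
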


Note that for any $\sigma\in \Omega^{2}$ we have $\sigma\in
\Omega^{2}_{\sigma}$. The next lemma gives condition on $\sigma$
under which $\Omega^{2}_{\sigma}$ contains only $\sigma$ i.e
$\Omega^{2}_{\sigma}=\{\sigma\}$.

\begin{lemma}\label{lema-3}
$\Omega^{2}_{\sigma}=\{\sigma\}$ if and only if
$\sigma=(\varphi,\psi)$ with $\varphi=\psi$.
\end{lemma}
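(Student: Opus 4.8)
The plan is to work directly from the component-wise description of $\Omega^{2}_{\sigma}$ established in the proof of Lemma \ref{lema-1}, so that both directions reduce to counting the size of $\Omega_{\sigma}$. Writing $\sigma=(\varphi,\psi)$ with $\varphi=(\varphi_1,\dots,\varphi_m)$ and $\psi=(\psi_1,\dots,\psi_m)$ in terms of the subcells on the components $\Lambda_1,\dots,\Lambda_m$, recall that
\[
\Omega^{2}_{\sigma}=\Omega_{\sigma}\times\Omega_{\sigma},\qquad
\Omega_{\sigma}=\{\omega=(\omega_1,\dots,\omega_m):\ \omega_i\in\{\varphi_i,\psi_i\},\ i=1,\dots,m\}.
\]
Everything then follows from understanding when $\Omega_{\sigma}$ is a singleton.

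For the ``if'' direction I would assume $\varphi=\psi$. Then $\varphi_i=\psi_i$ for every $i$, so each constraint set $\{\varphi_i,\psi_i\}$ is a single subcell, and the only $\omega$ satisfying all of the membership conditions is $\omega=\varphi$. Hence $\Omega_{\sigma}=\{\varphi\}$ and $\Omega^{2}_{\sigma}=\{(\varphi,\varphi)\}=\{\sigma\}$, as required.

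For the ``only if'' direction I would argue by contraposition. If $\varphi\ne\psi$, then they must disagree on at least one component, say $\varphi_j\ne\psi_j$. Since $\varphi$ and $\psi$ always lie in $\Omega_{\sigma}$ (taking $\omega_i=\varphi_i$, respectively $\omega_i=\psi_i$, for all $i$), the set $\Omega_{\sigma}$ contains the two distinct cells $\varphi$ and $\psi$. Therefore $\Omega^{2}_{\sigma}=\Omega_{\sigma}\times\Omega_{\sigma}$ has at least four elements, so in particular $\Omega^{2}_{\sigma}\ne\{\sigma\}$.

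The argument is elementary once the product description of $\Omega^{2}_{\sigma}$ is in hand; I do not expect any genuine obstacle, only the bookkeeping of translating the defining condition (5) into the component-wise membership condition used above. The one point worth stating cleanly is that $\varphi,\psi\in\Omega_{\sigma}$ unconditionally, which immediately forces $|\Omega_{\sigma}|\ge 2$, and hence $|\Omega^{2}_{\sigma}|\ge 4$, the moment $\varphi\ne\psi$.
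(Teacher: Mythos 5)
Your proof is correct and takes essentially the same route as the paper, whose entire proof is the single observation that $(\varphi,\varphi),(\varphi,\psi),(\psi,\varphi),(\psi,\psi)\in\Omega^{2}_{\sigma}$ --- precisely your remark that $\varphi,\psi\in\Omega_{\sigma}$ unconditionally, so $\varphi\ne\psi$ forces $|\Omega^{2}_{\sigma}|\ge 4$. If anything you are more complete than the paper: it leaves the ``if'' direction implicit, whereas you verify component-wise that $\varphi=\psi$ collapses $\Omega_{\sigma}$ to $\{\varphi\}$ and hence $\Omega^{2}_{\sigma}$ to $\{\sigma\}$.
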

\begin{proof}
By definition of  $\Omega_{\sigma}^{2}$ with $\sigma=(\varphi,\psi)$
we have $(\varphi,\varphi), (\varphi,\psi), (\psi,\varphi),
(\psi,\psi)\in \Omega^{2}_{\sigma}$. This completes the proof.
\end{proof}

\begin{theorem}\label{thm-1}
The set $\Omega^{2}_{\sigma}$ generates an evolution subalgebra of
the evolution algebra $\mathcal{E}$ for any $\sigma\in \Omega^{2}$.
\end{theorem}
\begin{proof}
Take $\varphi\in \Omega^{2}_{\sigma}$. By (6) and (7) we get
\[\varphi{^2}=\sum_{\psi\in\Omega^{2}}P_{\varphi\psi}\psi=\sum_{\psi\in\Omega^{2}_{\sigma}}P_{\varphi\psi}\psi.\]
According to Lemma \ref{lema-1} we have
$\Omega^{2}_{\varphi}\subseteq \Omega^{2}_{\sigma}$. Then if we
denote the subalgebra that is generated by $\Omega^{2}_{\sigma}$ by
$\langle \Omega^{2}_{\sigma} \rangle$, it is clear that
$\varphi{^2}\in \langle \Omega{^2}_{\sigma}\rangle$, whenever
$\varphi\in \Omega^{2}_{\sigma}$. Thus $\Omega^{2}_{\sigma}$
generates an evolution algebra. Theorem is proved.
\end{proof}

As a corollary of Theorem \ref{thm-1} and Lemma \ref{lema-3} we have
a corollary.
\begin{corollary}
The evolution algebra $\mathcal{E}$ has the evolution subalgebra
$\langle \theta\rangle$ which is generated by one generator
$\theta=(\varphi,\varphi)$ for any $\varphi\in \Omega$.
\end{corollary}

\begin{remark}
1) Note that $\langle\theta\rangle$ is an one dimensional subalgebra
and number of such algebras is equal to $|\Omega
|=|\Phi|^{|\Lambda|}$ where $|M|$ denotes number of elements
(cardinality) of $M$.

2) For any $\sigma=(\varphi,\psi)\in \Omega{^2}$ with $\varphi\ne
\psi$ to construct minimal set $\Omega^{2}_{\sigma}$ we must take a
connected graph $G$. Then $\Omega^{2}_{\sigma}$  contains four
elements only.
\end{remark}

\begin{remark}
For any $\sigma=(\varphi,\psi)\in \Omega^{2}$ with $\varphi\ne \psi$
we have $\omega_{1}=(\varphi,\varphi), \omega_{2}=(\psi,\psi)\in
\Omega^{2}_{\sigma}$. By Lemma \ref{lema-3} we get
$\Omega^{2}_{\omega_{i}}=\{\omega_{i}\}$, $i=1,2.$
\end{remark}

We put these remarks together, and have a structural theorem for
finite connected graphs.

\begin{theorem}\label{structure}
If the finite graph $G=(\Lambda, L)$ is connected and $|\Lambda|=n$,
then $\mathcal{E}=\mathcal{E}(G,\Phi,\mu)$ has dimension of
$k^{2n}$, where $k=|\Phi|$. Furthermore, $\mathcal{E}(G,\Phi,\mu)$
has $\frac{1}{2}k^{n}(k^{n}-1)$ 4-dimensional evolution subalgebras,
and $k^{n}$ 1-dimensional evolution subalgebras.
$\mathcal{E}(G,\Phi,\mu)$ has a two-level hierarchy, 0-th level has
$k^{n}$ 1-dimensional subalgebras and 1st level has
$\frac{1}{2}k^{n}(k^{n}-1)$ 2-dimensional subalgebras.
\end{theorem}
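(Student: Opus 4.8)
The plan is to reduce everything to the connected case and then count carefully. Since $G$ is connected there is a single maximal connected component $\Lambda_1=\Lambda$, so $m=1$ and every cell is a single subcell. Consequently the description of $\Omega^2_\sigma$ obtained in the proof of Lemma \ref{lema-1} collapses: for $\sigma=(\varphi,\psi)$ we have $\Omega^2_\sigma=\{(\nu,\omega):\nu,\omega\in\{\varphi,\psi\}\}$. First I would record the dimension. Because $|\Omega|=|\Phi|^{|\Lambda|}=k^n$ and $\Omega^2$ is by construction a basis of $\mathcal{E}$, it follows immediately that $\dim\mathcal{E}=|\Omega^2|=(k^n)^2=k^{2n}$.

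Next I would split according to whether $\varphi=\psi$. If $\varphi\ne\psi$, the set $\Omega^2_\sigma$ consists of exactly the four distinct elements $(\varphi,\varphi),(\varphi,\psi),(\psi,\varphi),(\psi,\psi)$, and by Theorem \ref{thm-1} it generates a $4$-dimensional evolution subalgebra. If $\varphi=\psi$, Lemma \ref{lema-3} gives $\Omega^2_\sigma=\{\sigma\}$, and the corollary of Theorem \ref{thm-1} yields a $1$-dimensional subalgebra generated by the diagonal element $(\varphi,\varphi)$.

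The counting is the core of the argument, and its main obstacle is avoiding over-counting. For the $1$-dimensional subalgebras the generators are the diagonal elements $(\varphi,\varphi)$, one per $\varphi\in\Omega$, giving $k^n$ of them. For the $4$-dimensional subalgebras I would observe that all four generators of $\Omega^2_\sigma$ produce the same set, so $(\varphi,\psi)$ and $(\psi,\varphi)$ (indeed all four) determine a single subalgebra; hence these are indexed by the unordered pairs $\{\varphi,\psi\}$ with $\varphi\ne\psi$. The delicate point is injectivity: distinct unordered pairs must yield distinct subalgebras. This holds because the basis $\Omega^2_\sigma$ recovers the pair $\{\varphi,\psi\}$ (for instance as the set of first coordinates appearing among its generators), so no two pairs collide. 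The count is therefore $\binom{k^n}{2}=\tfrac12 k^n(k^n-1)$.

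Finally, for the two-level hierarchy I would place the $k^n$ one-dimensional subalgebras $\langle(\varphi,\varphi)\rangle$ at level $0$. At level $1$, for each unordered pair of distinct diagonal generators $\{(\varphi,\varphi),(\psi,\psi)\}$ I take $\mathrm{span}\{(\varphi,\varphi),(\psi,\psi)\}$. This is a $2$-dimensional evolution subalgebra: the two basis vectors annihilate each other by (7) since they are distinct, and each squares to a multiple of itself, because $\Omega^2_{(\varphi,\varphi)}=\{(\varphi,\varphi)\}$ by Lemma \ref{lema-3} forces $(\varphi,\varphi)^2=(\varphi,\varphi)$ (the single structural coefficient being $1$); hence the span is closed under multiplication. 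These subalgebras are again indexed by the unordered pairs of elements of $\Omega$, and the same injectivity remark gives exactly $\binom{k^n}{2}=\tfrac12 k^n(k^n-1)$ subalgebras at level $1$, completing the hierarchy.
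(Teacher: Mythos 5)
Your dimension count and your enumeration of the $1$-dimensional and $4$-dimensional subalgebras are correct and are essentially the paper's own argument: the paper in fact prints no proof of this theorem, stating only that it results from ``putting together'' Remark 2, Remark 3, Theorem \ref{thm-1} and Lemma \ref{lema-3}, and your reduction to $m=1$, the identification of $\Omega^2_\sigma$ with the four pairs $(\nu,\omega)$, $\nu,\omega\in\{\varphi,\psi\}$, and the indexing of the $4$-dimensional subalgebras by unordered pairs (with the injectivity check, which the paper leaves tacit) is exactly that assembly carried out carefully.

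However, your identification of the 1st level of the hierarchy contains a genuine error. You place at level $1$ the spans $\mathrm{span}\{(\varphi,\varphi),(\psi,\psi)\}$ of two distinct diagonal idempotents. These are indeed $2$-dimensional evolution subalgebras, but each is merely the direct sum of two of the simple $1$-dimensional subalgebras you already put at level $0$; a semisimple object of this kind cannot occupy level $1$. In the hierarchy of an evolution algebra (Tian's book, which the paper follows), the $0$-th level consists of simple subalgebras generated by \emph{persistent} generators, while the 1st level is generated by \emph{transit} elements, taken modulo the $0$-th level. Here the transit elements are the off-diagonal pairs, and the intended $2$-dimensional level-$1$ subalgebras are $\langle(\varphi,\psi),(\psi,\varphi)\rangle$ with $\varphi\ne\psi$, regarded inside the quotient $B_0$: note that $(\varphi,\psi)^2$ has nonzero coefficients on $(\varphi,\varphi)$ and $(\psi,\psi)$, so the span of a transit pair is closed under multiplication only after the diagonal terms are quotiented out --- which is precisely why the paper writes $\mathcal{E}_1=\langle\varphi_{11}\rangle\oplus\cdots\oplus\langle\varphi_{44}\rangle\,\dot{+}\,B_0$ and $\langle\varphi_{12}\rangle_1=\langle\varphi_{12},\varphi_{21}\rangle_1$ in Example 1. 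With the correct transit-pair identification the count is unchanged, $\binom{k^n}{2}=\frac{1}{2}k^n(k^n-1)$, and the dynamical picture the paper emphasizes (two flows from each level-$1$ subalgebra down to $\langle(\varphi,\varphi)\rangle$ and $\langle(\psi,\psi)\rangle$) appears; your diagonal spans, by contrast, carry no flow to level $0$ at all, so the hierarchy claim as you argue it does not hold.
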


For a fixed graph $G$, when we take different Gibbs measures for the
function space $\Omega$, the algebra
$\mathcal{E}=\mathcal{E}(G,\Phi,\mu)$ will be different. What are
relations among these algebras? or How do Gibbs measures affect
algebras? The following theorem answers this question when the graph
$G$ is finite.

\begin{theorem}\label{structure-0}
For a fixed finite graph $G=(\Lambda, L)$, all evolution algebras
$\mathcal{E}(G,\Phi,\mu)$ defined by different Gibbs measures $\mu$
are isomorphic each other. When $G$ is connected, their hierarchies
have two levels, share the same skeleton.
\end{theorem}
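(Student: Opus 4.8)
The plan is to separate the statement into two parts: that the \emph{skeleton} (the zero/non-zero pattern of the structure constants) is independent of the measure, and that the algebras attached to different measures are genuinely isomorphic. I would establish the skeleton part first, since it is the backbone of the whole assertion. By (6) the coefficient $P_{\varphi\psi}$ is non-zero precisely when $\psi\in\Omega^2_\varphi$, and since every Gibbs measure considered here satisfies $\mu(\sigma)>0$ for all $\sigma\in\Omega$, the product measure $\mu^2$ is strictly positive on $\Omega^2$; hence $P_{\varphi\psi}>0$ for every $\psi\in\Omega^2_\varphi$ and $P_{\varphi\psi}=0$ otherwise. The supports $\Omega^2_\varphi$ are defined purely combinatorially from $G$ and $\Phi$ (see (5) and Lemma \ref{lema-1}), with no reference to $\mu$. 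Thus for \emph{every} admissible $\mu$ the structure matrix $(P_{\varphi\psi})$ has one and the same support pattern, which in the connected case (by Theorem \ref{structure} and Lemma \ref{lema-3}) organizes $\Omega^2$ into the $k^n$ diagonal idempotents $(\varphi,\varphi)$ together with the $\tfrac12 k^n(k^n-1)$ four-element blocks $\{(\varphi,\varphi),(\varphi,\psi),(\psi,\varphi),(\psi,\psi)\}$. This common pattern is precisely the shared skeleton and the shared two-level hierarchy, so that half of the statement comes for free.

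For the isomorphism I would fix two admissible measures $\mu,\mu'$ and try to build an algebra isomorphism $f\colon\mathcal E(G,\Phi,\mu)\to\mathcal E(G,\Phi,\mu')$ adapted to this skeleton. First I would pin down the rigid part of any such map: by Lemma \ref{lema-3} the diagonal generators $(\varphi,\varphi)$ are idempotents, $(\varphi,\varphi)^2=(\varphi,\varphi)$, and one checks they are exactly the primitive idempotents of the algebra; hence $f$ must permute them, inducing a permutation $\pi$ of $\Omega$ with $f((\varphi,\varphi))=(\pi\varphi,\pi\varphi)'$. With the diagonal behaviour forced, I would then attempt to define $f$ on the off-diagonal generators of each block by a rescaling $f((\varphi,\psi))=c_{\varphi\psi}\,(\pi\varphi,\pi\psi)'$, choosing the scalars $c_{\varphi\psi}$ so that the defining relation (7) is transported correctly. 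Theorem \ref{thm-1} guarantees that each block is an invariant subalgebra, so the construction can be carried out one block at a time and then assembled into a global map.

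The main obstacle is precisely the determination of the scalars $c_{\varphi\psi}$, and this is where the ``same skeleton'' intuition must be supplemented by a genuine computation. Writing $t=\mu(\varphi)/(\mu(\varphi)+\mu(\psi))$ and $t'=\mu'(\pi\varphi)/(\mu'(\pi\varphi)+\mu'(\pi\psi))$, the relation $(\varphi,\psi)^2=t^2(\varphi,\varphi)+t(1-t)\big[(\varphi,\psi)+(\psi,\varphi)\big]+(1-t)^2(\psi,\psi)$ forces, upon comparing the coefficients on the two rigidly mapped idempotents $(\pi\varphi,\pi\varphi)'$ and $(\pi\psi,\pi\psi)'$, the two conditions $c_{\varphi\psi}^2=t^2/t'^2$ and $c_{\varphi\psi}^2=(1-t)^2/(1-t')^2$ simultaneously. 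Reconciling these is the crux, since a pure rescaling can meet both only under a relation tying $t$ to $t'$. I would therefore try to absorb the discrepancy into the remaining freedom of the construction, namely the choice of the permutation $\pi$ on the idempotents and the orthogonal freedom within each $\{(\varphi,\psi),(\psi,\varphi)\}$-plane, both of whose elements square to the same vector. The heart of the argument should then be to show that $\pi$ and these block rotations can be selected so that every compatibility condition holds at once; establishing this, rather than the routine verification that the skeleton is measure-independent, is where the real content and the real difficulty of the theorem reside, and I would expect it to impose strong matching constraints between $\mu$ and $\mu'$ that must be analyzed carefully.
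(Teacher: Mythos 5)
Your proposal is honest about where it stops, and unfortunately the step you defer --- choosing the permutation $\pi$ and the block rotations so that ``every compatibility condition holds at once'' --- cannot in general be carried out, so the proof is incomplete at its crux. Push your own computation one step further: inside a single block with basis $e_1=(\varphi,\varphi)$, $e_2=(\psi,\psi)$, $u=(\varphi,\psi)$, $v=(\psi,\varphi)$, your rigidity argument forces $f(e_1),f(e_2)$ to be the diagonal idempotents of the image block, and then the relations $f(u)\cdot f(e_i)=0$ kill every diagonal component of $f(u)$, so (at least over $\R$) $f(u)=\alpha u'+\beta v'$ and $f(v)=\gamma u'+\delta v'$ within the image block. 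But then $f(u)^2=(\alpha^2+\beta^2)\,w'$, where $w'$ denotes the common square of $u'$ and $v'$: the rotation freedom enters only through the single scalar $\rho=\alpha^2+\beta^2$, i.e.\ it buys nothing beyond the rescaling you already tried. Comparing coefficients on the two rigid idempotents gives $\rho\, t'^2=t^2$ and $\rho\,(1-t')^2=(1-t)^2$, hence $t'/t=\pm(1-t')/(1-t)$, which for $t,t'\in(0,1)$ forces $t'=t$ (or $t'=1-t$ after swapping the two idempotents). Globally this would require $\mu'(\pi\varphi)/\mu'(\pi\psi)$ to equal $\mu(\varphi)/\mu(\psi)$ or its reciprocal for every pair $\varphi\ne\psi$, which fails for generic pairs of measures. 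So your program, completed honestly, shows that no isomorphism of the kind you set out to build exists in general; the ``strong matching constraints between $\mu$ and $\mu'$'' you anticipated are real and generically unsatisfiable.

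It is worth seeing how the paper itself handles this: it does not. The paper's proof sets $\digamma(\varphi)=\varphi$ on generators, extends linearly, and argues that since $P_{\varphi\psi}(\mu_1)\ne0$ iff $P_{\varphi\psi}(\mu_2)\ne0$, the map ``keeps all generating relations.'' That is exactly, and only, the skeleton half of the statement --- the half you correctly dispatched in your first paragraph from the positivity of $\mu^2$ and the measure-independence of the supports $\Omega^2_\varphi$. The identity map does not intertwine the actual structure constants: $\digamma(\varphi^2)=\sum_\psi P_{\varphi\psi}(\mu_1)\psi$ while $\digamma(\varphi)^2=\sum_\psi P_{\varphi\psi}(\mu_2)\psi$, and these differ unless the coefficients coincide. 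In other words, the paper never confronts the scalar-matching problem you isolated; its argument establishes an isomorphism of skeletons (zero patterns of the structure matrices), not an isomorphism of algebras in the strict sense, and your blockwise computation is precisely the demonstration that the strict claim needs either a weaker notion of isomorphism or additional hypotheses on the measures. So: your skeleton argument coincides with the paper's actual content; your attempted second half is a genuine gap as a proof of the theorem as stated, but the obstruction you found is not a defect of your route --- it is an obstruction the paper's own one-line map quietly steps over.
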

\begin{proof}
Take two Gibbs measures $\mu_{1}$ and $\mu_{2}$, let's assign a 1-1
and onto map $\digamma$ between $\mathcal{E}(G,\Phi,\mu_{1})$ and
$\mathcal{E}(G,\Phi,\mu_{2})$. These two algebras share the same
generator set $\Omega=\{\varphi\mid \varphi: \Lambda \longrightarrow
\Phi\}$. We define $\digamma(\varphi)=\varphi$, and then linearly
extend this map to the whole algebra. Let's denote, the structural
coefficients for the algebra defined by the Gibbs measure $\mu_{i}$
by $P_{\varphi\psi}(\mu_{i})$. From the properties of Gibbs
measures, $P_{\varphi\psi}(\mu_{1})\neq 0$ if and only if
$P_{\varphi\psi}(\mu_{2})\neq 0$, or $P_{\varphi\psi}(\mu_{1})= 0$
if and only if $P_{\varphi\psi}(\mu_{2})= 0$. Then the map
$\digamma$ keeps all generating relations, and keep all algebraic
relations too. $\digamma$ is an algebraic map. Therefore,
$\mathcal{E}(G,\Phi,\mu_{1})$ is isomorphic to
$\mathcal{E}(G,\Phi,\mu_{2})$ \cite{tian}.

When $G$ is connected, from Theorem \ref{structure} we know that the
hierarchy of the algebra $\mathcal{E}(G,\Phi,\mu)$ has two levels
for any Gibbs measure $\mu$. As to these algebra's common skeleton
which is homomorphic to each of them \cite{tian}, it keeps all
algebraic relations except the dimensions. So, in the 0-th level the
skeleton has $n^{k}$ 1-dimensional subalgebras and in the 1st level
the skeleton has $\frac{1}{2}n^{k}(n^{k}-1)$ 1-dimensional
subalgebras.
\end{proof}

Actually, for these algebras $\mathcal{E}(G,\Phi,\mu)$ differed by
Gibbs measure $\mu$, when $G$ is finite and connected graph, they
represent a similar dynamical system. For example, there are two
flows directs from each subalgebra at 1st level of their hierarchy
to two different subalgebras at 0-th level of the hierarchy. We will
illustrate this point in the following Example 1.

For two elements interpreted as parents $\sigma, \tau\in \Omega^{2}$
we denote $\tau\prec \sigma$ if $P_{\sigma\tau}>0$ (see the formula
(6)). By our construction $\tau\prec\sigma$ if and only if $\tau\in
\Omega^{2}_{\sigma}$.

\begin{theorem}
For any $\sigma\in \Omega^{2} $ there exists $n\in
\{1,2,...,|\Omega^{2}_{\sigma}|\}$ and $\tau_1,\tau_2,...,\tau_n\in
\Omega^{2}_{\sigma}$ such that
$\tau_n\prec\tau_{n-1}\prec...\prec\tau_1\prec\sigma$ with
$|\Omega^{2}_{\tau_i}|>2$, $i=1,...,n-1$ and
$\Omega^{2}_{\tau_n}=\{\tau_n\}.$
\end{theorem}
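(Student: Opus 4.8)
The plan is to reduce the whole statement to a single integer attached to $\sigma$, namely the number of connected components on which the two ``parents'' forming $\sigma$ disagree, and then to build the required chain by freezing those components one at a time.

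First I would recall the explicit description of $\Omega^2_\sigma$ established inside the proof of Lemma \ref{lema-1}. Writing $\sigma=(\varphi,\psi)$ with $\varphi=(\varphi_1,\dots,\varphi_m)$ and $\psi=(\psi_1,\dots,\psi_m)$ in the component decomposition $\Phi_i=\Phi^{\Lambda_i}$, one has $\Omega^2_\sigma=\{(\nu,\omega):\nu_i,\omega_i\in\{\varphi_i,\psi_i\},\ i=1,\dots,m\}$. Let $r=r(\sigma)=|\{i:\varphi_i\ne\psi_i\}|$. Counting ordered pairs $(\nu_i,\omega_i)$ componentwise gives $|\Omega^2_\sigma|=4^{r}$, since a disagreeing component contributes $4$ choices and an agreeing one contributes $1$. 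In particular $|\Omega^2_\sigma|>2$ holds exactly when $r\ge1$, i.e. exactly when $\sigma$ is not of the diagonal form $(\varphi,\varphi)$; this is the quantitative refinement of Lemma \ref{lema-3} that I will use to verify the hypotheses $|\Omega^2_{\tau_i}|>2$.

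Next I would dispose of the degenerate case $r=0$: here $\sigma=(\varphi,\varphi)$, so by Lemma \ref{lema-3} $\Omega^2_\sigma=\{\sigma\}$, and the chain of length $n=1$ given by $\tau_1=\sigma$ already works (the conditions on $\tau_1,\dots,\tau_{n-1}$ are vacuous and $\Omega^2_{\tau_n}=\{\sigma\}=\{\tau_n\}$). For the main case $r\ge1$, after relabelling components so that $\varphi_i\ne\psi_i$ precisely for $i=1,\dots,r$, I would define a descending family $\tau_0=\sigma,\tau_1,\dots,\tau_r$ by freezing one disagreeing component at each stage: set $\tau_j=(\nu^{(j)},\omega^{(j)})$ with $\nu^{(j)}_i=\omega^{(j)}_i=\varphi_i$ for $i\le j$, with $\nu^{(j)}_i=\varphi_i,\ \omega^{(j)}_i=\psi_i$ for $j<i\le r$, and both equal to the common value for $i>r$. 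Then $\tau_j$ disagrees on exactly $r-j$ components, so $|\Omega^2_{\tau_j}|=4^{\,r-j}$, and I take $n=r$.

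It then remains to check the two structural claims. For the chain relation, recall from the construction following (6) that $\tau_{j+1}\prec\tau_j$ is equivalent to $\tau_{j+1}\in\Omega^2_{\tau_j}$ (here $\mu(\sigma)>0$ guarantees $P_{\tau_j\tau_{j+1}}>0$ as soon as $\tau_{j+1}\in\Omega^2_{\tau_j}$). A componentwise comparison shows $\nu^{(j+1)}_i,\omega^{(j+1)}_i\in\{\nu^{(j)}_i,\omega^{(j)}_i\}$ for every $i$: the only component changed from $\tau_j$ to $\tau_{j+1}$ is $i=j+1$, where both entries are set to $\varphi_{j+1}\in\{\varphi_{j+1},\psi_{j+1}\}=\{\nu^{(j)}_{j+1},\omega^{(j)}_{j+1}\}$. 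Hence $\tau_r\prec\tau_{r-1}\prec\dots\prec\tau_1\prec\sigma$; the intermediate terms $\tau_i$ with $1\le i\le n-1$ have $r-i\ge1$ disagreeing components, so $|\Omega^2_{\tau_i}|=4^{\,r-i}\ge4>2$, while $\tau_n=\tau_r$ is diagonal and thus $\Omega^2_{\tau_n}=\{\tau_n\}$ by Lemma \ref{lema-3}. Finally $n=r\le 4^{r}=|\Omega^2_\sigma|$, so $n$ lies in the prescribed range. I do not anticipate a genuine obstacle, since this is a finite descent; the only point requiring care is that Lemma \ref{lema-1} only yields $\Omega^2_{\tau_j}\subseteq\Omega^2_\sigma$, so the membership $\tau_{j+1}\in\Omega^2_{\tau_j}$ (not merely in $\Omega^2_\sigma$) must be verified directly, as above, together with the indexing bookkeeping in the definition of the $\tau_j$.
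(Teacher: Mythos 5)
Your proof is correct, and it takes a genuinely more constructive route than the paper's. The paper argues by an abstract finite descent: starting from $\sigma$ it repeatedly picks a new element $\tau_{k+1}\in\Omega^{2}_{\tau_k}$, uses Lemma \ref{lema-1} to keep the sets $\Omega^{2}_{\tau_k}$ nested, and appeals to finiteness together with Remark 3 (the diagonal pairs $(\varphi,\varphi),(\psi,\psi)$ always lie in $\Omega^{2}_{\sigma}$) to conclude that after at most $|\Omega^{2}_{\sigma}|$ steps one lands on some $\tau_n$ with $\Omega^{2}_{\tau_n}=\{\tau_n\}$; the condition $|\Omega^{2}_{\tau_i}|>2$ is never actually verified there, and the nesting used is not strict (e.g. $\tau_1=(\psi,\varphi)$ gives $\Omega^{2}_{\tau_1}=\Omega^{2}_{\sigma}$), so the termination claim is left loose. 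You instead attach to $\sigma=(\varphi,\psi)$ the invariant $r=|\{i:\varphi_i\ne\psi_i\}|$, prove the quantitative refinement $|\Omega^{2}_{\sigma}|=4^{r}$ (absent from the paper), and build an explicit chain of length exactly $n=r$ by freezing one disagreeing component per step. This buys everything the paper glosses over: the condition $|\Omega^{2}_{\tau_i}|>2$ is automatic because cardinalities are powers of $4$, hence never $2$ or $3$; each step strictly decreases $r$, so termination is genuine; the bound $n\le|\Omega^{2}_{\sigma}|$ follows from $r\le 4^{r}$; and you correctly isolate and check the one delicate membership, $\tau_{j+1}\in\Omega^{2}_{\tau_j}$ rather than merely $\tau_{j+1}\in\Omega^{2}_{\sigma}$, which Lemma \ref{lema-1} alone does not give. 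What the paper's route buys in exchange is brevity and independence from the componentwise description of $\Omega^{2}_{\sigma}$; one could also note that Remark 3 makes the bare existence statement trivial with $n=1$ and $\tau_1=(\varphi,\varphi)$, whereas your construction produces the longest such chain. The degenerate diagonal case is handled identically in both arguments.
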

\begin{proof}
If $\Omega^{2}_{\sigma}=\{\sigma\}$ then $n=1$ and $\tau_1=\sigma$.
If there exists $\tau_1\in \Omega^{2}_{\sigma}$ such that $\tau_1\ne
\sigma$ then consider $\Omega^{2}_{\tau_1}$ which is subset (by
Lemma \ref{lema-1}) of $\Omega^{2}_{\sigma}$. If there exists
$\tau_2\in \Omega^{2}_{\tau_1}$ such that $\tau_2\ne \tau_1, \sigma$
then $\Omega^{2}_{\tau_2}\subseteq \Omega^{2}_{\tau_1}$ and so on.
Iterating this argument at most $|\Omega^{2}_{\sigma}|$ time, we get
(see Remark 3) $\Omega^{2}_{\tau_n}=\{\tau_n\}$ with some $n\in
\{1,...,|\Omega^{2}_{\sigma}|\}$.
\end{proof}

\begin{corollary}
For any $\sigma\in \Omega^{2}$ there are subalgebras
$\langle\Omega^{2}_{\tau_i}\rangle$, $i=1,...,n$ such that
\[\langle\Omega^{2}_{\tau_n}\rangle \subseteq
\langle\Omega^{2}_{\tau_{n-1}}\rangle \subseteq... \subseteq
\langle\Omega^{2}_{\sigma}\rangle.\]
\end{corollary}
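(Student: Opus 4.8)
The plan is to obtain the claimed chain of subalgebra inclusions directly from the chain $\tau_n\prec\tau_{n-1}\prec\cdots\prec\tau_1\prec\sigma$ supplied by the preceding theorem, converting the relation $\prec$ into set inclusions by Lemma \ref{lema-1} and then invoking Theorem \ref{thm-1} so that every generating set in question actually spans an evolution subalgebra.

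First I would apply the preceding theorem to the given $\sigma\in\Omega^{2}$ to fix a chain $\tau_n\prec\tau_{n-1}\prec\cdots\prec\tau_1\prec\sigma$ with all $\tau_i\in\Omega^{2}_{\sigma}$. By the definition of $\prec$ recalled immediately before that theorem, each link $\tau_i\prec\tau_{i-1}$ (writing $\tau_0=\sigma$) is by construction the same as the membership statement $\tau_i\in\Omega^{2}_{\tau_{i-1}}$.

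Next I would feed each such membership into Lemma \ref{lema-1}, which turns $\tau_i\in\Omega^{2}_{\tau_{i-1}}$ into the set inclusion $\Omega^{2}_{\tau_i}\subseteq\Omega^{2}_{\tau_{i-1}}$. Stringing these together for $i=1,\dots,n$ yields the nested family of generating sets
\[\Omega^{2}_{\tau_n}\subseteq\Omega^{2}_{\tau_{n-1}}\subseteq\cdots\subseteq\Omega^{2}_{\tau_1}\subseteq\Omega^{2}_{\sigma}.\]
Theorem \ref{thm-1} then guarantees that each of $\Omega^{2}_{\tau_1},\dots,\Omega^{2}_{\tau_n}$ and $\Omega^{2}_{\sigma}$ generates an evolution subalgebra of $\mathcal{E}$.

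Finally I would close the argument by the monotonicity of the generated-subalgebra operation: because $\langle S\rangle$ is the smallest subalgebra containing $S$, any set inclusion $S\subseteq T$ forces $\langle S\rangle\subseteq\langle T\rangle$. Applying this along the nested family above produces
\[\langle\Omega^{2}_{\tau_n}\rangle\subseteq\langle\Omega^{2}_{\tau_{n-1}}\rangle\subseteq\cdots\subseteq\langle\Omega^{2}_{\sigma}\rangle,\]
which is exactly the assertion. I do not expect any genuine obstacle, since the corollary merely repackages the theorem together with lemmas already proved; the only point deserving a word of care is this last monotonicity step, where one must observe that a subalgebra containing the larger set $T$ automatically contains every generator of $S$ and hence contains all of $\langle S\rangle$.
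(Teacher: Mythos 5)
Your proposal is correct and follows exactly the route the paper intends (the paper states the corollary without a written proof, as an immediate consequence of the preceding theorem): the chain $\tau_n\prec\cdots\prec\tau_1\prec\sigma$ gives memberships $\tau_i\in\Omega^{2}_{\tau_{i-1}}$, Lemma \ref{lema-1} converts these into the nested sets $\Omega^{2}_{\tau_i}\subseteq\Omega^{2}_{\tau_{i-1}}$, and Theorem \ref{thm-1} plus monotonicity of $S\mapsto\langle S\rangle$ yields the chain of subalgebras. Your explicit remark on the monotonicity step is a welcome bit of care that the paper leaves tacit, but it is the same argument.
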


Recall the hierarchy structure theorem of evolution algebras that,
for any element of the algebra, there is a sequence of subalgebras
which ends at a simple subalgebra such that each of them contains
this element. For the algebras $\mathcal{E}(G,\Phi,\mu)$, this
corollary tells how the flows direct or where their children will
be.

\section{Examples}

\noindent {\bf Example 1:} Consider $\Lambda=\{1,2\},\ L=\{<1,2>\}$,
i.e. $G$ is connected graph with one edge $<1,2>$. Take $\Phi=\{a,
A\}$ as a set of two alleles for some genetic trait,  then
\[\Omega=\{\sigma_{1}=(a,a), \sigma_{2}=(a,A), \sigma_{3}=(A,a),
\sigma_{4}=(A,A)\}.\]
\[\Omega^{2}=\{\varphi_{ij}=(\sigma_{i},\sigma_{j}): i,j=1,2,3,4\},\ \
\Omega^{2}_{\varphi_{ii}}=\{\varphi_{ii}\},\]
\[\Omega^{2}_{\varphi_{ij}}=\{\varphi_{ii}, \varphi_{ij},
\varphi_{ji}, \varphi_{jj}\}, \ \ i,j=1,2,3,4.\] Consider a Gibbs
measure $\mu$ on $\Omega$: $\mu(\sigma_{i})=p_i>0$, with
$p_1+p_2+p_3+p_4=1$. Then we have
\[P_{\varphi_{ii}\psi}=\left\{\begin{array}{ll}
1 \ \ {\rm if} \ \ \psi=\varphi_{ii},\\
0 \ \ {\rm if} \ \ \psi\ne \varphi_{ii}
\end{array}\right.\ \ i=1,2,3,4.\] and
\[P_{\varphi_{ij}\psi}=\left\{\begin{array}{lll}
p_i^2(p_i+p_j)^{-2} \ \ {\rm if} \ \ \psi=\varphi_{ii},\\
p_j^2(p_i+p_j)^{-2} \ \ {\rm if} \ \ \psi=\varphi_{jj},\\
p_ip_j(p_i+p_j)^{-2} \ \ {\rm if} \ \ \psi=\varphi_{ij}, \psi_{ji}
\end{array}\right.\ \ i\ne j, \ i,j=1,2,3,4.\]
Correspondingly, the evolution algebra
$\mathcal{E}_{1}=\mathcal{E}(G,\Phi,\mu)$ is given by relations
\[\left\{\begin{array}{lll}
\varphi^{2}_{ii}=\varphi_{ii}, i=1,2,3,4\\
\varphi^{2}_{ij}=(p_i+p_j)^{-2}\left(p^2_i\varphi_{ii}+p_ip_j(\varphi_{ij}+\varphi_{ji})+p^2_j\varphi_{jj}\right), \ \ i\ne j,\\
\varphi\cdot\psi=0 \ \ \mbox{if} \ \ \varphi\ne \psi.\\
\end{array}\right.\ \ i,j=1,...,4. \eqno(8)\]
Denote the subalgebra generated by an element $\varphi$ by
$<\varphi>$. The algebra $\mathcal{E}_{1}$ has 6 subalgebras,
$<\varphi_{12}>$, $<\varphi_{13}>$, $<\varphi_{14}>$,
$<\varphi_{23}>$, $<\varphi_{24}>$, and $<\varphi_{34}>$. These
algebras are not simple, and each has dimension of 4. For example,
$<\varphi_{12}>=<\varphi_{11}, \varphi_{12}, \varphi_{21},
\varphi_{22}>$ has 4 generators, and its dimension also is 4. There
are four persistent generators, $<\varphi_{ii}>$, $i=1,2,3,4$. Each
of these persistent elements generates a simple evolution algebra
with dimension 1. According to \cite{tian}, algebras
$<\varphi_{11}>$, $<\varphi_{22}>$, $<\varphi_{33}>$ and
$<\varphi_{44}>$ are $0-th$ simple subalgebras. The 1st subalgebras
are generated by transit elements. There are 6 transit generators.
For example, $<\varphi_{12}>_{1}=<\varphi_{12}, \varphi_{21}>_{1}$.
Therefore, this algebra $\mathcal{E}$ has two levels in its
hierarchy, four simple subalgebras at the $0-th$, six simple
subalgebras at the $1-st$ level. Namely,
\[\mathcal{E}_{1}=<\varphi_{11}>\bigoplus <\varphi_{22}>\bigoplus
<\varphi_{33}>\bigoplus <\varphi_{44}>\dot{+} B_{0},\]
\[B_{0}=<\varphi_{12}>\bigoplus <\varphi_{13}>\bigoplus
<\varphi_{23}>\bigoplus <\varphi_{14}>\bigoplus
<\varphi_{24}>\bigoplus <\varphi_{34}>.\] There are two dynamical
flows from each subalgebra at the 1st level to two different
subalgebras at the 0-th level. For example, from the subalgebra
$<\varphi_{12}>$, one flow directs to the subalgebra
$<\varphi_{11}>$ and the other one directs to the subalgebra
$<\varphi_{22}>.$

\noindent {\bf Example 2:} Consider graph $G=(\Lambda, L)$ with
$\Lambda=\{1,2\}$, $ L=\emptyset$ and $\Phi=\{a,A\}.$  Then
$\Omega^{2}$, $\Omega^{2}_{\varphi_{ii}}$,
$\Omega^{2}_{\varphi_{ij}}$ are the same as the corresponding sets
in Example 1 if $(ij)\ne (14),(41),(23),(32)$. But
\[\Omega^{2}_{\varphi_{14}}=\Omega^{2}_{\varphi_{41}}=\Omega^{2}_{\varphi_{23}}=\Omega^{2}_{\varphi_{32}}=\Omega^{2},\]
which are large than corresponding sets in the Example 1.

Correspondingly, the evolution algebra
$\mathcal{E}_{2}=\mathcal{E}(G,\Phi,\mu)$ is given by relations
\[\left\{\begin{array}{llll}
\varphi^2_{ii}=\varphi_{ii}, i=1,2,3,4\\
\varphi^{2}_{ij}=(p_i+p_j)^{-2}\left(p^2_i\varphi_{ii}+p_ip_j(\varphi_{ij}+\varphi_{ji})+p^2_j\varphi_{jj}\right), i\ne j, (ij)\ne (14),(41),(23),(32)\\
\varphi^{2}_{23}=\varphi^{2}_{32}=\varphi^{2}_{14}=\varphi^{2}_{41}=\sum^4_{i,j=1}p_ip_j\varphi_{ij},\
\ (ij) =(14),(41),(23),(32)\\
\varphi\cdot\psi=0 \ \ \mbox{if} \ \ \varphi\ne \psi.\\
\end{array}\right. \]

The $\mathcal{E}_{2}$ has 3 levels in its hierarchy showed in the
following. \[\mathcal{E}_{2}=<\varphi_{11}>\bigoplus
<\varphi_{22}>\bigoplus <\varphi_{33}>\bigoplus
<\varphi_{44}>\dot{+} B_{0},\] \[B_{0}=<\varphi_{12}>\bigoplus
<\varphi_{13}>\bigoplus <\varphi_{24}>\bigoplus <\varphi_{34}>
\dot{+}B_{1},\] \[B_{1}=<\varphi_{14}>.\] Dynamically, there are 4
flows from the subalgebra $<\varphi_{14}>$ at the second level
direct to each subalgebra at the 1st level. There are two flows from
each subalgebra at the 1st level direct two different subalgebras at
the 0-th level.

\begin{remark}
If we assume $e_1=\varphi_{44}, e_2=\varphi_{11},
e_3=\varphi_{24}=\varphi_{42}=\varphi_{34}=\varphi_{43},
e_4=\varphi_{12}=\varphi_{21}=\varphi_{13}=\varphi_{31},
e_5=\varphi_{14}=\varphi_{41},
e_6=\varphi_{22}=\varphi_{33}=\varphi_{23}=\varphi_{32}$ and
$p_2=p_3$ then the evolution algebra is given by relations
\[\left\{\begin{array}{llll}
e_1^2=e_1,\ \ e_2^2=e_2,\ \ e_6^2=e_6\\
e_3^2=(p_2+p_4)^{-2}\left(p^2_4e_1+2p_2p_4e_3+p^2_2e_6\right),\\
e_4^2=(p_1+p_2)^{-2}\left(p^2_1e_2+2p_1p_2e_4+p^2_2e_6\right),\\
e_5^2=p^2_4e_1+p_1^2e_2+4p_2p_4e_3+4p_1p_2e_4+2p_1p_4e_5+4p^2_2e_6.\\
\end{array}\right. \]
This algebra is similar to the algebra considered in Example 7 of
(\cite{tian}, p.89), but does not coincide with it for any $p_i>0,
i=1,...,4$ i.e for any Gibbs measure $\mu$.
\end{remark}

\section{The case of infinite graphs}

We consider countable graphs to have countable many vertices and
edges. Let $G=(\Lambda,L)$ be a countable graph, and $\Phi$ be a
finite set. Then the set of all functions $\sigma:\Lambda\to\Phi$,
denoted by $\Omega$, is uncountable. Let $S(\Omega)$ be the set of
all probability measures defined on $(\Omega, {\cal F}),$ where
${\cal F}$ is the standard $\sigma-$algebra generated by the
finite-dimensional cylindrical set. Let $\mu$ be a measure on
$(\Omega, {\cal F})$ such that $\mu(B)>0$ for any finite-dimensional
cylindrical set $B\in {\cal F}.$ Note that only Gibbs measure have
this property \cite{preston}.

Fix a finite subset $M\subset \Lambda.$ We say that $\sigma\in
\Omega$ and $\varphi\in \Omega$ are equivalent if
$\sigma(M)=\varphi(M).$ Let $\xi=\{\Omega_{i}, i=1,2,...,q\},$
(where $q= |\Phi|^{|M|}$) be the partition of $\Omega$ generated by
this equivalent relation, $\Omega_{i}$ contains all equivalent
elements.

Let $G_M=(M, L_M)$ be finite subgraph of $G$ with $L_M=\{<x,y>\in L:
x,y\in M\}$. Let $M_i, i=1,...,m$ be maximal connected subgraphs of
$G_M$.

Consider $\Omega^{2}=\Omega\times\Omega$ as the set of all pairs of
parents. Fix $\sigma=(\varphi,\psi)\in\Omega^{2}$ and set

\[\Omega_{M,\sigma}=\left\{\omega\in \Omega: \omega(M_i)=\sigma(M_i),
\ \ \mbox{or}\ \ \omega(M_i)=\psi(M_i), \ \ i=1,...,m\right\}.\]

Define $P^M_{\varphi\psi}=P^M(\varphi,\psi)$ on
$\Omega^{2}_{M,\varphi}$ as
\[P^M(\varphi,\psi)=\left\{\begin{array}{ll}
\frac{\mu^2(\Omega^{2}_i)}{Z^M_{\varphi,i}}, \ \ {\rm if} \ \ \psi\in \Omega^{2}_{M,\varphi}\cap \Omega^{2}_i, \ i=1,...,q,\\
0 \ \ {\rm otherwise}
\end{array}\right.\ \ \eqno (9)\]
where \[Z^M_{\varphi,i}=\mu^2(\Omega^{2}_{M,\varphi}\cap
\Omega^{2}_{i})\sum^q_{j=1}\mu^2(\Omega^2_j).\]

Using (9) we now can define an evolution algebra $\mathcal{E}_M$ by
the following defining relations \[ \varphi\cdot
\varphi=\int_{\Omega^{2}}
P^M(\varphi,\psi)\psi=\sum^q_{i=1}{\mu^2(\Omega^{2}_i)\over
Z^M_{\varphi,i}} \int_{\Omega^{2}_{M,\varphi}\cap
\Omega^{2}_{i}}\psi, \ \ \varphi\in \Omega^{2}\]
\[\varphi\cdot\psi=0 \ \ \mbox{if} \ \ \varphi\ne \psi, \
\varphi,\psi\in \Omega^{2}\]

Note that by our construction the coefficients (9) depend on fixed
$M$. Consider an increasing sequence of connected, finite sets
$M_1\subset M_2\subset ...\subset M_n \subset ...$ such that $\cup_n
M_n=\Lambda$. For each $M_n$, using Gibbs measure $\mu_n$ defined on
$\Omega_{n}=\Phi^{M_n}$, we can define evolution algebras
$\mathcal{E}_{M_n}$, $n=1,2,...$

Denote by $P^{(n)}_{\varphi\psi}$ the coefficients (9) which is
constructed by $M_n$ and $\mu_n.$ An interesting problem is to
describe the set of all cells $\varphi, \psi$ such that the
following limits exist
$$P_{\varphi \psi}=\lim_{n\to\infty}P^{(n)}_{\varphi \psi}. \eqno(10)$$
Note that if $\varphi$ and $\psi$ are  equal almost sure (i.e. the
set $\{x: \varphi(x)\ne \psi(x)\}$ is finite) then the limit (10)
exists.

The evolution algebra $\mathcal{E}=\mathcal{E}(\mu)$ defined by
limit coefficients (10) and a limit Gibbs measure $\mu$ (see section
2) is called a limit evolution algebra.

These investigations allows to a natural introduction of
thermodynamics in studying of such evolution algebras. More
precisely, if $\Omega$ is continual set then one can associate the
Gibbs measure $\mu$ by a Hamiltonian $H$ (defined on $\Omega$) and
temperature $T>0$. It is known that depending on the Hamiltonian and
the values of the temperature the measure $\mu$ can be non unique.
In this case there is a phase transition of the physical system with
the Hamiltonian $H$. From the constructive definition of evolution
algebras, each $\mathcal{E}_{M_n}$ is a finite dimensional algebra,
but the limit evolution algebra will be an algebra with uncountable
dimension if it exists. Phase transitions in thermodynamics could
bring out some ideas in study continual algebras.

Let us consider the Potts model on $Z^{d}$ as an example of
Hamiltonian $H$. In this case $\Omega$ is the set of all cells
$\sigma:Z^{d}\to \Phi=\{1,...,q\}.$

The (formal) Hamiltonian of the Potts model is \[
H(\sigma)=-J\sum_{x,y\in Z^{d}\atop
\|x-y\|=1}\delta_{\sigma(x)\sigma(y)},\] where $J\in R.$

It is well-known (see \cite{sinai}, Theorem 2.3) that for the Potts
model with $q\geq 2$ there exist critical temperature $T_{\rm cr}$
such that for any $T<T_{\rm cr}$ there are $q$ distinct extreme
Gibbs measures $\mu^{(i)},$ $i=1,...,q.$

Let $M_n, n=1,2,...$ are fixed (as above) subsets of $Z^d$. Note
that, for $T$ low enough each measure $\mu^{(i)}$ is a small
deviation of the constant cells $\sigma^{(i)}\equiv i, i=1,...,q$.
This means that, when $T\to 0$, the restriction $\mu_n^{(i)}$ of
$\mu^{(i)}$ on $M_n$ converges (as $n\to \infty$) weakly towards
$\delta_{\sigma^{(i)}}$, the Dirac measure at the constant cell
$\sigma^{(i)}$. Thus $P^{(n)}_{\varphi\psi}\to 1,$  as $n\to\infty$
if $\varphi, \psi$ are equal to $\sigma^{(i)}$ almost sure and
$P^{(n)}_{\varphi\psi}\to 0$ otherwise. Hence, when $T\to 0$ then
Gibbs measures $\mu^{(i)}$, $i=1,..,q$ of the Potts model give $q$
different {\it limit} evolution algebras
$\mathcal{E}^{(i)}=\langle\sigma^{(i)}\rangle$, $i=1,...,q$.

\noindent {\bf Open problems:} We have defined evolution algebras
$\mathcal{E}=\mathcal{E}(G,\Phi,\mu)$ for a given graph, state
spaces and Gibbs measures. If graphs are finite and connected, we
obtained a structure theorem for this type of algebras. If graphs
are finite but not connected, we don't know their precise
structures. If graphs are countable, we do not know any deep
results. We therefore list several interesting open problems here.

1. If graphs are finite and not connected, what are structures of
$\mathcal{E}(G,\Phi,\mu)$? For example, if a graph has two
components, how many simple algebras does $\mathcal{E}(G,\Phi,\mu)$
have? How does its hierarchy look like?

2. If graphs are countable and connected, what are the structures of
algebras $\mathcal{E}(G,\Phi,\mu)$? Do we have a similar theorem as
that in the case of finite graphs \ref{structure}?

3. For a fixed countable graph, how do Gibbs measures affect the
limit evolution algebras defined in this section? From the example
on Potts model above we know there could be several or many
different limit evolution algebras. Are these limit algebras
isomorphic or homomorphic? Do we have a similar theorem as the
Theorem \ref{structure-0}?

4. For countable graphs, is there any condition under which the
limit evolution algebra exist?

5. More generally, for countable graphs how does the thermodynamics
(the phase transition) affect properties of evolution algebras
corresponding to Gibbs measures of the Hamiltonian $H$? How do
evolution algebras reflect thermodynamics?

\begin{acknowledgement}
U. A. Rozikov thanks the Abdus Salam International Center for
Theoretical Physics (ICTP), Trieste, Italy for providing financial
support of his visit to ICTP (February-April 2009). J. P. Tian is
supported by New faculty start-up fund in the College of William and
Mary, USA.
\end{acknowledgement}


\begin{thebibliography}{99}

\bibitem{n-n} Ganikhodjaev N.N., An application of the theory of Gibbs
distributions to mathematical genetics, {\it Doklady Math.} {\bf
61}: 321-323 (2000)

\bibitem{rozikov} Ganikhodjaev, N. N.; Rozikov, U. A. On quadratic stochastic
operators generated by Gibbs distributions. {\it Regul. Chaotic
Dyn.} 11 (2006), no. 4, 467-473.

\bibitem{h-o} Georgii H-O.,  {\it Gibbs measures and phase transitions.} (Walter
de Gruyter, Berlin, 1988).

\bibitem{preston}  Preston C., {\it Gibbs states on countable sets.} (Cambridge
University Press, London 1974);

\bibitem{sinai} Sinai Ya. G., {\it Theory of phase transitions: Rigorous
Results.} (Pergamon, Oxford, 1982).

\bibitem{tian} Tian J.P., {\it Evolution algebras and their applications.}
(Lecture Notes in Mathematics, 1921. Springer, Berlin, 2008).

\end{thebibliography}
\end{document}